
\documentclass[preprint,11pt]{elsarticle}
\usepackage{amsmath}
\usepackage{amssymb}
\usepackage{latexsym}
\usepackage{graphicx}

\usepackage[usenames]{color}




\input amssym.def
\newsymbol\rtimes 226F
\newfont{\nset}{msbm10}

\newtheorem{theo}{Theorem}[section]
\newtheorem{theorem}[theo]{Theorem}

\newtheorem{lemma}[theo]{Lemma}
\newtheorem{definition}[theo]{Definition}

\newtheorem{coro}[theo]{Corollary}

\journal{Discrete Applied Mathematics}

\begin{document}

\begin{frontmatter}

\title{The Number of Spanning Trees in Apollonian Networks
}

\author{Zhongzhi Zhang, Bin Wu }
\address{School of Computer Science and  Shanghai Key Lab of Intelligent Information Processing,
      Fudan  University, Shanghai 200433, China  ({\tt zhangzz@fudan.edu.cn}).}
\author{Francesc Comellas}
\address{Dep. Matem\`atica Aplicada IV, EETAC,
     Universitat Polit\`ecnica de Catalunya, c/ Esteve Terradas 5, Castelldefels (Barcelona), Catalonia,
     Spain ({\tt comellas@ma4.upc.edu}).}


\begin{abstract}
In this paper we find an exact analytical expression for the number of spanning trees in Apollonian networks. This parameter  can be related to significant topological and dynamic properties of the networks, including percolation, epidemic spreading, synchronization, and random walks. As Apollonian networks constitute an interesting family of maximal planar graphs which are
simultaneously small-world, scale-free, Euclidean and  space filling, modular and highly clustered, the study of their spanning trees is of particular relevance. Our results allow also the calculation of  the spanning tree entropy of  Apollonian networks, which we compare with those of other graphs with the same average degree.
\end{abstract}

\begin{keyword}
Apollonian networks, spanning trees, small-world graphs, complex networks,  self-similar, maximally planar, scale-free
\end{keyword}


\end{frontmatter}

\section{Apollonian networks}
In the process known as Apollonian packing~\cite{KaSu43}, which dates back to Apollonius of Perga (c262--c190 BC),
we start with three mutually tangent circles, and draw their inner Soddy circle (tangent to the three circles).
Next we draw the inner Soddy circles of this circle with each pair of the original three,
and the process is iterated, see Fig.~\ref{Fig.1}.

\begin{figure}[htbp]
 \centering
 \includegraphics[width=0.7\textwidth]{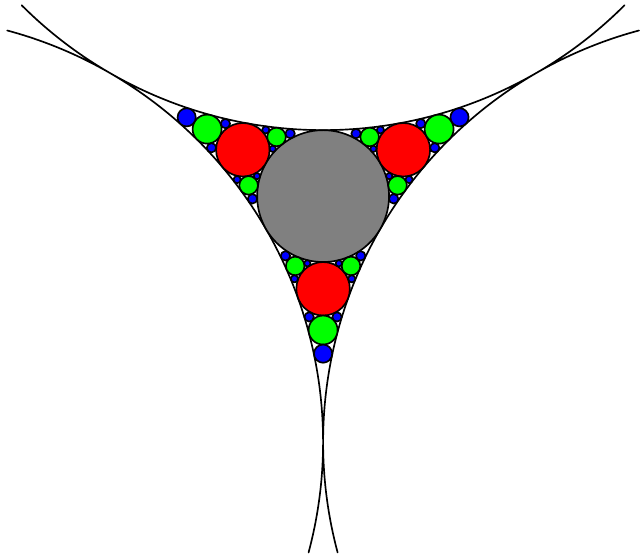}
 \caption{First stages of an Apollonian packing process.}
 \label{Fig.1}
 \end{figure}

An Apollonian packing can be used to design a graph, when each circle is associated to a vertex of the graph
and vertices are connected if their corresponding circles are tangent.
This graph, known as Apollonian graph or  two-dimensional Apollonian network, was introduced
by Andrade et al. \cite{AnHeAnSi05}  and independently proposed by Doye and Massen in \cite{DoMa05}.

We provide here the formal definition and main topological properties of two dimensional Apollonian networks.
We  use standard graph terminology and the words ``network'' and ``graph'' indistinctly.

\begin{definition}
An Apollonian network  $A(n)$, $n\geq 0$,  is a graph  constructed as follows:

For $n=0$, $A(0)$ is the complete graph $K_3$ $($also called a $3$-clique or triangle$)$.

For  $n\geq 1$, $A(n)$ is  obtained from $A(n-1)$:
For each of the existing subgraphs of $A(n-1)$ that is
isomorphic to a $3$-clique and created at step $n-1$,
a new vertex is introduced and connected to all
the vertices of this subgraph. Figure~\ref{Fig.2} shows this
construction process.
\end{definition}

 \begin{figure}[htbp]
 \centering
 \includegraphics[width=0.8\textwidth]{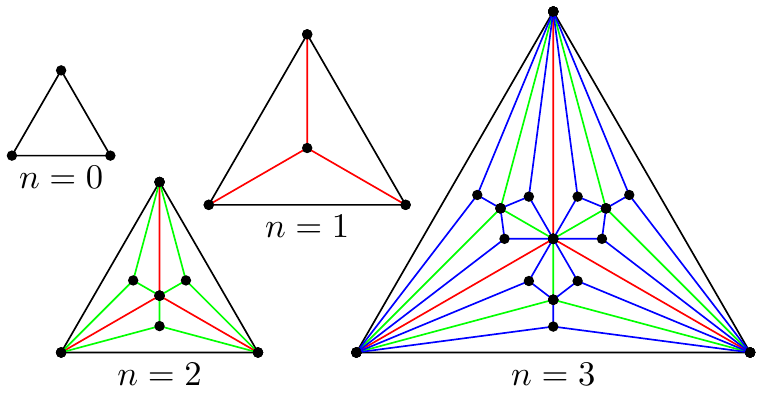}
 \caption{Apollonian graphs $A(n)$ produced at  iterations $n=0,1, 2, 3$ and $4$.}
 \label{Fig.2}
 \end{figure}

The order and size of an Apollonian graph $A(n)=(V(n),E(n))$  are $V_n=|V(n)|=\frac{1}{2}(3^n+5)$ and 
$E_n=|E(n)|=\frac{3}{2}(3^n+1)$.
The graph is scale-free with a power law degree distribution with exponent
$-\ln 3/ \ln 2$.
Many real networks share this property with exponent values in the same range as $A(n )$~\cite{Ne03}.
From the Pearson correlation coefficient for the degrees of the endvertices of the edges of $A(n)$ 
the exact value of the correlation coefficient can be obtained and it is always negative and  goes to zero as the order of the graph increases.
Thus the network is disassortative. 
Most technological and biological networks are disassortative as it is also the case of some information
networks, see~\cite{Ne03,SoVa04}.
It is also possible to obtain the exact analytical value of the  average distance of $A(n)$~\cite{ZhChZhFaGuZo08} which, for $n$ large,  follows  $\bar{d}(n) \sim \ln |V_{n}|$  and shows a logarithmic scaling with the order of the graph.
As the diameter has a similar behavior~\cite{ZhCoFe05},  the graph is small-world.
Moreover, Apollonian graphs are maximally planar, modular, Euclidean and  space filling~\cite{AnHeAnSi05,ZhYaWa05}.
Dynamical processes taking place on these networks, such as percolation, epidemic spreading,
synchronization and random walks, have been also investigated, see~\cite{HuXuWuWa06,ZhRoCo05,ZhRoZh06,ZhYaWa05,ZhZh07}.
Some authors even suggest that the topological and dynamical properties of Apollonian networks are  characteristic of neuronal networks as in  the brain cortex~\cite{PeArHePe07}.

In this paper we study the number of spanning trees of two-dimensional Apollonian networks.
This study is relevant given the importance of the graphs, and because
the number of spanning trees of a finite graph is
a  graph invariant  which
characterizes the reliability of a network \cite{Co87}
and is  related to its  optimal synchronization and the study of random walks \cite{Ma00}.
The number of spanning trees of a graph can be obtained from the product of all nonzero eigenvalues of the
Laplacian matrix of the graph~\cite{GoRo01} (Kirchhoff's matrix-tree theorem).
However, although this result  can be applied to any graph,  this
calculation is analytically and computationally demanding. 
In~\cite{LiWuZhCh11}, the number of spanning trees of two-dimensional Apollonian networks is found without an explicit proof, by using
Kirchhoff's theorem and a recursive evaluation of determinants.
Here, we  follow a different approach. Our method provides the number of spanning trees in Apollonian networks  through a process based on the self-similarity of graphs. 
The main advantage of this method is that it uses a recursive enumeration of subgraphs. Thus, the final tree count does not rely on results published elsewhere and the proof is self-contained.

\section{The number of spanning trees in Apollonian networks}

In this section we find the number of spanning trees of the Apollonian network $A(n)$.
For this calculation we apply a method~\cite{KnVa86} which has been used to find the number of spanning trees in other recursive graph families like the Sierpi\'nski
gasket~\cite{ChChYa07,TeWa06}, the pseudofractal web~\cite{ZhLiWuZo10}, and
some fractal lattices~\cite{Dh77,TeWa11,ZhLiWuZo11}. The main result can be stated as follows.

\begin{theorem}
\label{th:21} The number of spanning trees of the Apollonian network $A(n)$ is
\begin{equation*}
s_n=\frac{1}{4}3^{\frac{3}{4}(-1+3^{n-1}-2(n-1))}5^{\frac{1}{4}(-3+3^n-2(n-1))}(3^n+5^n)^2.
\end{equation*}
\end{theorem}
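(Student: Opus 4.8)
The plan is to exploit the self-similarity of $A(n)$ and reduce the global spanning-tree count to a finite system of recursions among a few boundary-conditioned forest counts. I would first fix the three outer corner vertices $X,Y,Z$ of $A(0)$ as distinguished terminals and record the decomposition underlying the whole argument: the vertex $W$ introduced at step $1$, together with the three spokes $WX,WY,WZ$, splits the interior of $A(n)$ into three regions, each grown for $n-1$ further steps and hence isomorphic to $A(n-1)$. Writing these copies as $G_1,G_2,G_3$ with terminal triples $\{X,Y,W\}$, $\{Y,Z,W\}$, $\{Z,X,W\}$, one checks directly that $A(n)=G_1\cup G_2\cup G_3$, that the copies meet only at the four vertices $X,Y,Z,W$, and that exactly the three spokes are shared, each by two copies. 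The identities $V_n=3V_{n-1}-5$ and $E_n=3E_{n-1}-3$, which reproduce the stated order and size, serve as a consistency check on this decomposition.

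Next I would introduce the auxiliary quantities needed for the recursion to close. For a copy with terminals $\{P,Q,R\}$ let $a_m$ be its number of spanning trees, $b_m$ the number of spanning forests with two components splitting the terminals as $\{P,Q\}\mid\{R\}$ (equal for all three splits by symmetry of the construction), and $c_m$ the number of spanning forests with three components, one terminal in each; in every case I require each component to contain a terminal, since a component avoiding all terminals of a copy would survive as an isolated component of $A(n)$ and so cannot occur in a spanning tree. The base values $a_0=3,\ b_0=1,\ c_0=1$ and $a_1=16,\ b_1=4,\ c_1=3$ are immediate from $A(0)=K_3$ and $A(1)=K_4$, and the target quantity is $s_n=a_n$.

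The heart of the proof is the derivation of the recursions $a_n=\Phi(a_{n-1},b_{n-1},c_{n-1})$, and likewise for $b_n,c_n$. A spanning tree of $A(n)$ restricts on each $G_i$ to a spanning forest whose induced partition of $\{X,Y,Z,W\}$ is counted by one of $a_{n-1},b_{n-1},c_{n-1}$; the global object is a single tree precisely when these three local partitions, once the hidden hub $W$ is summed out and the corners identified, glue to one connected acyclic structure. I would enumerate the admissible triples of local patterns and add their contributions, and the same bookkeeping with two or three surviving components gives the recursions for $b_n$ and $c_n$. I expect this enumeration to be the main obstacle, for two reasons: the spokes $WX,WY,WZ$ are shared between adjacent copies and must not be double counted, which I would handle by peeling the six boundary edges off the copies and treating the resulting mutually edge-disjoint interiors together with an explicit choice of which boundary edges enter the tree; and the connectivity of $W$ across all three copies must be tracked so that cycles through $W$ are excluded.

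Finally, with the polynomial system in hand I would solve it in closed form. I anticipate that a suitable ratio of the auxiliary quantities (for instance $b_n/a_n$ with $c_n/b_n$) obeys a linear-fractional recursion whose associated $2\times2$ matrix has eigenvalues $3$ and $5$; diagonalizing linearizes the recursion, and the combination $3^n+5^n$ occurring (squared) in the statement is exactly what such a diagonalization produces, while the $3^{n}$- and $n$-dependent exponents of $3$ and $5$ arise from telescoping the multiplicative prefactors (geometric–arithmetic sums in the exponent). I would then confirm the resulting expression for $a_n=s_n$ against the base values $s_0=3$ and $s_1=16$, and check that the leading order $\ln s_n\sim \tfrac14(\ln 3+\ln 5)\,3^n$ is consistent with $V_n\sim \tfrac12 3^n$, yielding the finite spanning-tree entropy $\tfrac12\ln 15$ expected for this family.
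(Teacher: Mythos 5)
Your setup is sound and is in fact the paper's own: you decompose $A(n)$ into three copies of $A(n-1)$ glued along the three spokes, classify the restriction of a spanning tree to each copy by the partition it induces on that copy's three terminals, and your consistency checks and base values ($a_0=3$, $b_0=1$, $c_0=1$, $a_1=16$, $b_1=4$, $c_1=3$) are all correct for your conventions. The only bookkeeping difference is that the paper's classes exclude the hub edges and handle subgraphs containing them through separate bijections (delete/add a hub edge, Lemma~\ref{lmm:22}), and its letters are permuted relative to yours (the paper's $a_n$ counts your three-component forests $c_n$, and its $s_n$ is your $a_n$); either convention can be made to work.

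The gap is that the two steps carrying all of the mathematical content are announced but not performed. First, the enumeration of admissible gluing patterns --- which you yourself call the main obstacle --- is exactly where the work lies: it must produce the explicit polynomial recursions (in the paper's variables, $a_{n+1}=3a_n^3+6a_n^2b_n$, $b_{n+1}=a_n^3+7a_n^2b_n+7a_nb_n^2+a_n^2c_n$, $c_{n+1}=a_n^3+12a_n^2b_n+36a_nb_n^2+14b_n^3+3a_n^2c_n+12a_nb_nc_n$, $s_{n+1}=16a_n^3+72a_n^2b_n+78a_nb_n^2+14b_n^3+9a_n^2c_n+12a_nb_nc_n$), whose coefficients come from a careful symmetry count over configurations; nothing in your outline determines these coefficients, and with your hub-edge-inclusive classes plus an explicit choice among the $2^6$ states of the boundary edges, the case analysis is if anything larger than the paper's. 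Second, your plan to solve the system rests on the anticipation that a ratio of the auxiliary quantities obeys a linear-fractional recursion whose matrix has eigenvalues $3$ and $5$. This turns out to be true, but only because of a hidden algebraic identity --- in the paper's notation, $a_nc_n=3b_n^2$ for all $n$, proved by induction through all three recursions --- which collapses the a priori two-dimensional rational dynamics of the ratios $(b_n/a_n,\,c_n/a_n)$ to the one-dimensional affine map $x_{n+1}=\frac{1}{3}(1+5x_n)$ with $x_n=b_n/a_n$. Without discovering and proving such an invariant (or an equivalent), the claim that the system diagonalizes is reverse-engineered from the known answer rather than established. So your proposal is a correct roadmap that matches the paper's route, but the recursion coefficients and the invariant that makes the system solvable are precisely what a complete proof must supply.
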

The definitions and lemmas that follow provide the proof of this theorem.
\medskip

From Fig.~\ref{Fig.2}, we  see that  Apollonian networks are  self-similar, 
suggesting  an alternative way to construct them.
As shown in Fig.~\ref{Fig.3}, $A(n+1)$ can be obtained by joining three replicas of $A(n)$, labeled by $A(n)^1$, $A(n)^2$ and $A(n)^3$, and merging three pairs of edges. 
This particular structure of  Apollonian networks allow us to write recursive equations
for the number of spanning trees, which are solved by induction.

 \begin{figure}[htbp]
 \centering
 \includegraphics[width=0.8\textwidth]{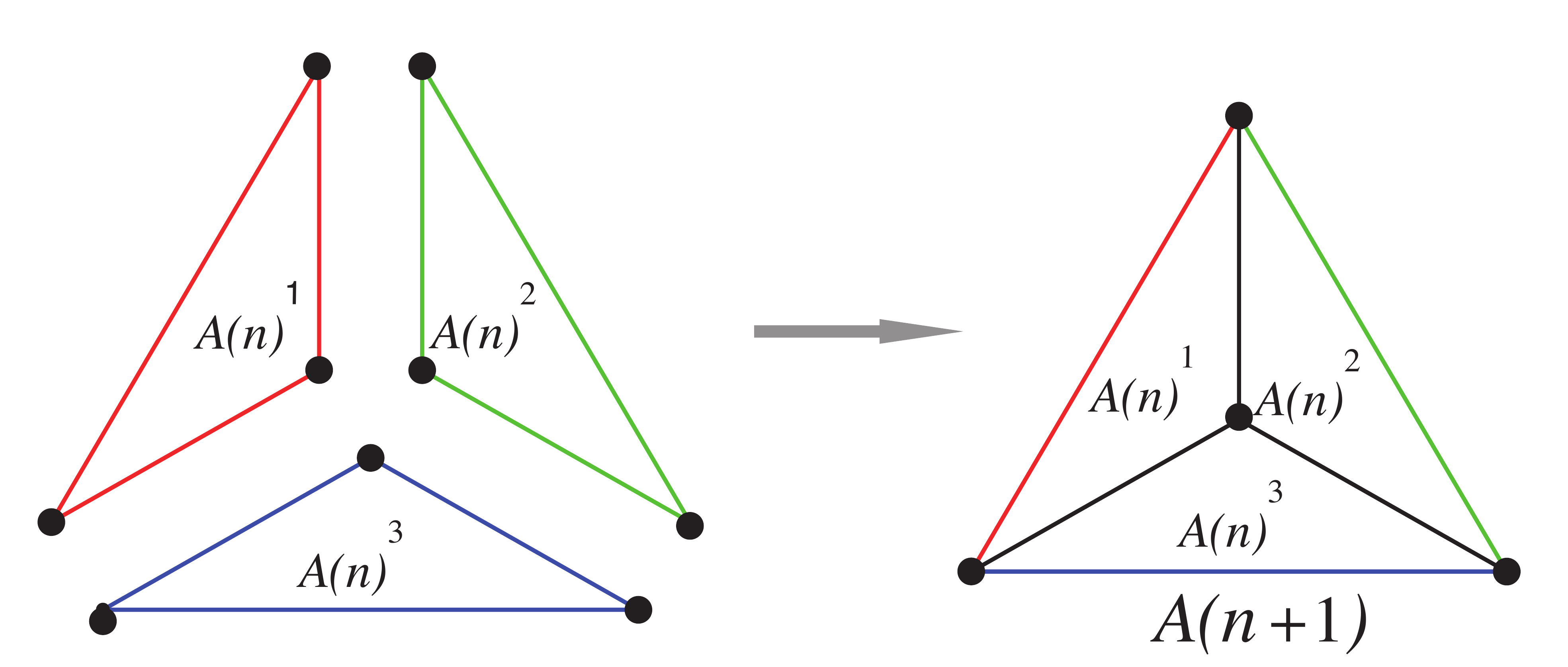}
 \caption{Recursive construction of Apollonian networks, pointing out their self-similarity.
$A(n+1)$ can be obtained by joining three replicas of $A(n)$, labeled   here  $A(n)^1$, $A(n)^2$ and $A(n)^3$,
after merging three pairs of edges.}
 \label{Fig.3}
 \end{figure}

In the following, we denote by $V_n$ and $E_n$ the number of vertices and edges of
$A(n)$. A spanning subgraph of $A(n)$ is a subgraph with the same
vertex set as $A(n)$ and a number of edges $E_n^\prime$ such that
$E_n^\prime \le E_n$. A spanning tree of $A(n)$ is a spanning subgraph
which is a tree and thus $E_n^\prime = V_n-1$.

We call ``hub vertices''  the three outmost vertices in the construction as shown in Fig.~\ref{Fig.2}
and ``hub edges'' the three exterior edges which connect the hub vertices.

To simplify our calculations, we introduce the following five classes of spanning subgraphs of  $A(n)$, see Fig.~\ref{Fig.4}:
Class $A_n$ has all spanning subgraphs of $A(n)$ which consist of three trees and such that each 
hub vertex of $A(n)$ belongs to a different tree.
Next three classes contain those spanning subgraphs of $A(n)$ which consist
of two trees such that no hub edges belong to the spanning subgraph and one of  the hub vertices
of the subgraph belongs to one tree and the other two hub vertices are in the second tree.
By taking into account the tree to which a given hub vertex belongs we have classes $B_n$, $B^{\prime}_n$ and  $B^{\prime\prime}_n$, 
Note that  all subgraphs in each of these classes can be obtained, by a given symmetry, from those in any of the other two classes (see ~Fig.~\ref{Fig.4}).
Finally, class $C_n$  contains all spanning trees  of $A(n)$ which have no hub edges.
 \begin{figure}[htbp]
 \centering
 \includegraphics[width=0.9\textwidth]{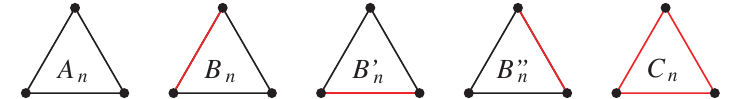}
 \caption{Schematic illustration of the five classes  of spanning subgraphs  $A_n$, $B_n$, $B^{\prime}_n$, $B^{\prime\prime}_n$  and $C_n$ derived from $A(n)$.
 In this figure, vertices represent the hub vertices of $A(n)$ and two hub vertices
 joined by a black line belong to different trees. A red line means that the two hub vertices are in the same tree but have no hub edge joining them.}
 \label{Fig.4}
 \end{figure}

These classes have cardinality $a_n$, $b_n$, $b^{\prime}_n$, $b^{\prime\prime}_n$ and $c_n$, respectively. Note that $b_n=b^{\prime}_n=b^{\prime\prime}_n$.
We denote as  $s_n$ the total number of spanning trees of $A(n)$.
In Fig.~\ref{Fig.5} we show the elements of these classes for $n=0, 1$.

This classification is introduced to facilitate the iterative calculation of the number of spanning trees 
as all spanning trees of  $A(n+1)$ can be constructed  from  subgraphs of $A(n)$ through the merging process introduced above (Fig.~\ref{Fig.3}).

 \begin{figure}[htbp]
 \begin{center}
 \includegraphics[width=0.9\textwidth]{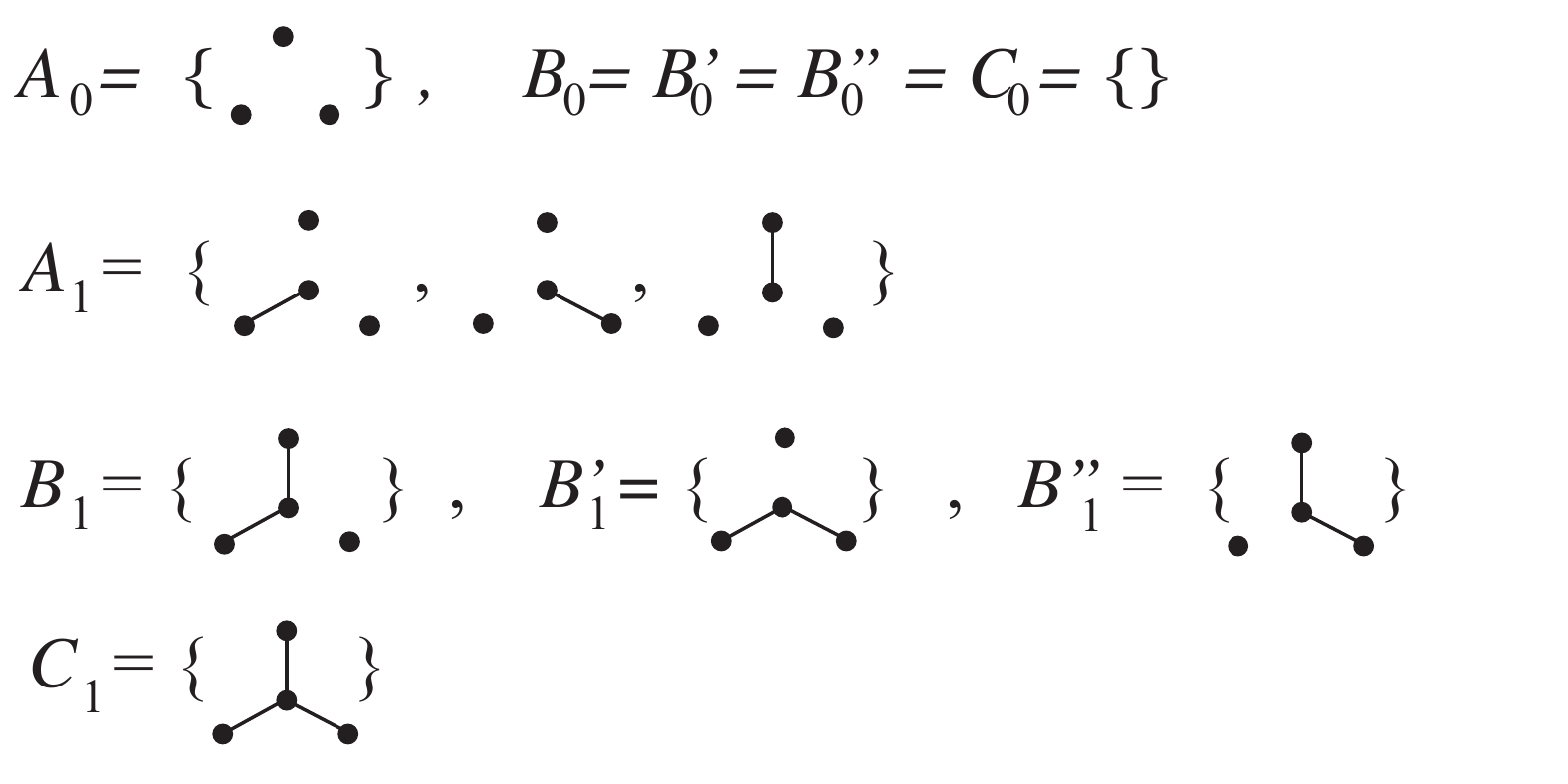}
 \caption{Subgraph classes $A_n$, $B_n$, $B^{\prime}_n$, $B^{\prime\prime}_n$  and $C_n$ for $n=0, 1$.
 Thus $a_0=1$, $b_0=b^{\prime}_0=b^{\prime\prime}_0=c_0=0$ and $a_1=3$, $b_1=b^{\prime}_1=b^{\prime\prime}_1=c_1=1$.}
 \label{Fig.5}
 \end{center}
 \end{figure}

In the previous definitions, we have not considered the cases where  the spanning subgraph contains hub edges.
We deal with these cases in the following lemma.

\begin{lemma}\label{lmm:22}
\null 
\begin{enumerate}
\renewcommand{\labelenumi}{\alph{enumi})}
\item The number of spanning subgraphs of $A(n)$ which consist of two trees such that one hub edge
with its two hub vertices belongs to one tree while  the third hub vertex of  $A(n)$ is in the other tree equals $a_n$.

\item The number of spanning subgraphs of $A(n)$ such that they contain just one hub edge and one  hub vertex
which is connected to one of the hub vertices of this edge through edges of the tree is $b_n$.

\item The number of spanning subgraphs of $A(n)$ that include two hub edges is $a_n$.
\end{enumerate}
\end{lemma}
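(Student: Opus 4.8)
The plan is to prove all three parts by a single device: \emph{adding or deleting hub edges} sets up explicit bijections between the configurations described here and the already-defined classes $A_n$ and $B_n$. The one structural fact I will use throughout is that every edge of a tree is a bridge. Consequently, if two hub vertices lie in different trees of a spanning forest, inserting the hub edge joining them merges those two trees into one without creating a cycle, while deleting a hub edge that lies inside a tree splits that tree into exactly two components. I will also use that there are only three hub vertices in all, so as soon as they are separated into three components each component contains exactly one of them. I would treat parts (a) and (c) first, since both reduce to $A_n$, and part (b) last, since it is the delicate one.

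For part (a), fix the hub edge in question, say $uv$, with $w$ the third hub vertex. Given a subgraph of the stated form --- two trees, with $u,v$ and the edge $uv$ in one tree and $w$ in the other --- I would delete $uv$. Since $uv$ is a bridge, its removal breaks the first tree into a component containing $u$ and a component containing $v$; together with the tree containing $w$ this yields a spanning forest of three trees, one per hub vertex, i.e. an element of $A_n$. The inverse map simply reinserts $uv$, merging the $u$-tree and the $v$-tree of an $A_n$-configuration. These maps are mutually inverse, so the count is exactly $a_n$. Part (c) is the same idea applied twice: for a spanning tree containing two hub edges, say $uv$ and $vw$ (a spanning tree cannot contain all three, as that would close the hub triangle), I would delete both. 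Each deletion removes a bridge, so the two removals turn the single tree into three components, which separate $u$, $v$ and $w$ and hence give an element of $A_n$; conversely, inserting $uv$ and then $vw$ into an $A_n$-forest lowers the number of components from three to one, producing a spanning tree that contains exactly these two hub edges. Hence this count is again $a_n$.

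For part (b), fix the hub edge $uv$ and record that the third hub $w$ is joined through tree edges to one specified endpoint, say $u$, so that the path from $w$ to $v$ uses the edge $uv$. Deleting $uv$ then separates $v$ into a tree of its own while $w$ stays in the tree of $u$, producing a two-tree spanning forest with no hub edge in which $v$ is alone and $u,w$ share a tree --- an element of one of the classes $B_n$, $B^{\prime}_n$, $B^{\prime\prime}_n$. The inverse map reinserts $uv$. Since these three classes have the common cardinality $b_n$, this yields the count $b_n$.

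The point needing the most care is precisely this last piece of bookkeeping. One must check that the endpoint to which $w$ is attached \emph{determines, and is determined by}, which hub vertex is isolated after the deletion, so that the fixed configuration corresponds to a single $B$-class rather than to a union of them (otherwise one would spuriously obtain $2b_n$). Verifying the component structure after each edge deletion --- which hub lands in which tree --- is the only genuinely delicate step, and it is exactly what the bridge property, together with there being only three hub vertices, guarantees.
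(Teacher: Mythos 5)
Your proof is correct and follows essentially the same route as the paper: both arguments establish bijections with the classes $A_n$ and $B_n$ by deleting the relevant hub edge(s) from the counted configurations and reinserting them for the inverse map. Your write-up is in fact more careful than the paper's very terse proof, notably the bridge argument tracking which hub vertex lands in which component, and the check in part (b) that the fixed attachment of the third hub vertex pins down a single $B$-class rather than a union of them.
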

 \begin{figure}[htbp]
 \centering
 \includegraphics[width=0.9\textwidth]{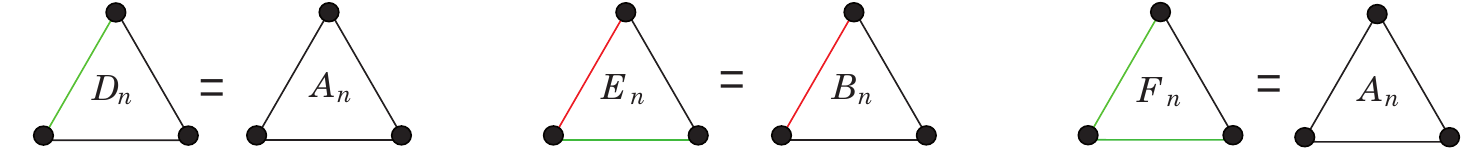}
 \caption{Graphical illustrations for Lemma~\ref{lmm:22}.  The green line means that the hub edge joining
 these two hub vertices  belongs to the spanning subgraph.}
 \label{Fig.6}
 \end{figure}

\begin{proof}
\begin{enumerate}
\renewcommand{\labelenumi}{\alph{enumi})}
\item Let  $D_n$ be the set of subgraphs considered in (a).
We verify the correctness of the result by showing that there exists an one-to-one correspondence between
the set $D_n$ and class $A_n$.  
For every spanning subgraph in $D_n$, if we remove the hub edge, then the three hub vertices
will  belong to three different trees, so it belongs to $A_n$, see Fig.~\ref{Fig.6}.
Conversely, for every spanning subgraph in $A_n$, if we add a hub edge, then its two
hub vertices belong to one tree and the subgraph is in $D_n$.
Thus, there exists a one-to-one correspondence between $D_n$ and $A_n$,  and the cardinality
of $D_n$ is $a_n$.

\item Let $E_n$ be the set of subgraphs considered in (b).  As above, we can verify an one-to-one correspondence
between sets  $E_n$  and $B_n$  by deleting the hub edge.
Thus, the cardinality of $E_n$ is $b_n$.

\item Consider the bijection between $F_n$, the set of subgraphs which contain two hub edges, and $A_n$ (Fig.~\ref{Fig.6}).
\end{enumerate}

\end{proof}

\bigskip


The following four lemmas establish recursive relationships among the  parameters
$a_n$, $b_n$, $c_n$ and $s_n$.

\begin{lemma}
\label{lmm:23}
For $n \geq 0$, $a_{n+1}=3a_n^3+6a_n^2b_n$.
\end{lemma}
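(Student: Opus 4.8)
The plan is to exploit the self-similar decomposition of Fig.~\ref{Fig.3}: $A(n+1)$ is the union of three copies $A(n)^1,A(n)^2,A(n)^3$ of $A(n)$, glued by merging three pairs of hub edges. The three merged edges all meet in one common vertex, which I denote $O$; each pair of copies shares exactly one outer hub vertex together with the edge joining it to $O$, and the three hub vertices $1,2,3$ of $A(n+1)$ are precisely these three shared outer vertices. Thus the hub set of each copy consists of $O$ together with two of $\{1,2,3\}$: say $A(n)^1$ has hubs $\{1,2,O\}$, $A(n)^2$ has $\{1,3,O\}$ and $A(n)^3$ has $\{2,3,O\}$. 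I would count $a_{n+1}$ by taking an arbitrary member $F$ of class $A_{n+1}$, restricting it to each copy, and classifying the three restrictions using the classes $A_n$, $B_n$ and their hub-edge counterparts from Lemma~\ref{lmm:22}.

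First I would record two structural facts. Because $F$ consists of three trees carrying $1,2,3$ in distinct trees, no restriction can have a component avoiding all hubs of its copy: such a component would use only interior (unshared) vertices and would therefore be an extra global tree containing none of $1,2,3$, a contradiction. Hence every component of every restriction contains a hub of its copy. Moreover the separation of $1,2,3$ forces each copy to keep its two outer hubs in different trees, and it forces $O$ to lie in the tree of exactly one of $1,2,3$ (otherwise $O$ would form a fourth tree). Using the threefold symmetry of the construction under permuting $1,2,3$, I would count the configurations in which $O$ shares its tree with a fixed outer hub, say $1$, and multiply the result by $3$.

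Fixing $O\sim 1$, the copy that does not contain vertex $1$, namely $A(n)^3$, must keep all of $2,3,O$ separate, so its restriction lies in class $A_n$ and contributes a factor $a_n$. The two remaining copies $A(n)^1,A(n)^2$ both contain $1$ and $O$, and together with the hub edge $\{1,O\}$ shared between them they must realize the unique connection from $O$ to $1$. By acyclicity this connection is provided by exactly one of three parallel routes: the shared hub edge itself, an internal path inside $A(n)^1$, or an internal path inside $A(n)^2$. If the shared edge is used, each of the two copies falls into the Lemma~\ref{lmm:22}(a) family and is counted by $a_n$, while the shared edge is counted once, giving $a_n\cdot a_n\cdot a_n=a_n^3$. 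If an internal path of one copy is used, that copy lies in $B_n$ (factor $b_n$) and the other in $A_n$ (factor $a_n$), giving $a_n^2 b_n$ for each of the two choices of copy. Summing the three routes yields $a_n^3+2a_n^2 b_n$, and multiplying by the symmetry factor $3$ gives $a_{n+1}=3a_n^3+6a_n^2 b_n$.

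The main obstacle is the bookkeeping of the three shared hub edges. The classes $A_n$, $B_n$, $C_n$ are defined to contain no hub edges, yet a valid member of $A_{n+1}$ may well use a shared hub edge; the role of Lemma~\ref{lmm:22} is exactly to convert a restriction that contains such an edge into a hub-edge-free count, and care is needed to count each shared edge once rather than once per incident copy. The second delicate point is global acyclicity: requiring each restriction separately to be a forest is not sufficient, since two copies could join the same pair of shared vertices by independent paths and create a cross-copy cycle. The \emph{exactly one route} argument around $O$ is what rules this out and simultaneously pins down the component count, and verifying it is the crux of the proof.
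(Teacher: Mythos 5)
Your proof is correct and takes essentially the same approach as the paper: both decompose $A(n+1)$ into the three copies of $A(n)$ glued along the three merged edges at the central vertex, classify the restrictions to the copies using $A_n$, $B_n$ and the hub-edge families of Lemma~\ref{lmm:22}, and apply the threefold symmetry. Your three ``routes'' from $O$ to its outer hub (shared edge, internal path in one copy, internal path in the other) are exactly the paper's two configuration types with multiplicities $3$ and $6$, with the acyclicity and bijection details that the paper delegates to its figures written out explicitly.
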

\begin{proof}
We prove this result by considering a graphical version of the equation (Fig.~\ref{Fig.8}) which represents the recursive construction method of $A(n+1)$ from $A(n)$ and enumerates all possible contributions to $a_{n+1}$.

In this representation we only draw four vertices in each case, since each non drawn (interior) vertex connects at least  to one of these four vertices (although they do not have necessarily to be adjacent). This is sufficient to determine whether each case belongs to $A_{n+1}$, $B_{n+1}$ or $C_{n+1}$ .
\begin{figure}[htbp]
 \centering
  \includegraphics[width=0.5\textwidth]{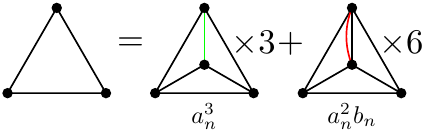}
 \caption{Configurations needed to find $a_{n+1}$ from $a_{n}$ and $b_n$.
 In this representation,  the red curve denotes  the spanning tree to which the two hub vertices belong to.
 The number on the right of the figure counts  configurations that, by symmetry, contribute
 to $a_{n+1}$  through the merging process.  The figure also identifies different contributions form  merging subgraphs.} \label{Fig.8}
 \end{figure}

Next we should prove that each configuration is correct, but we only analyze in detail the first additive term as the other term can be verified in a similar way.
For this case (see Fig.~\ref{Fig.100}), hub vertices $h_1$ and $h_4$, according to the merging process described at the beginning of this Section, belong to subgraphs in  both copies $A(n)^1$  and $A(n)^2$ where they are adjacent while $h_2$ and $h_3$ are in different copies. Thus, after merging these two edges, there are $a_n^3$ subgraphs which belong to $A_{n+1}$. Because of the symmetry, $h_4$ could also be adjacent to $h_2$ or $h_3$ (instead of being adjacent $h_1$) and we count three times this case.
\end{proof}

\ \begin{figure}[htbp]
 \centering
\includegraphics[width=0.4\textwidth]{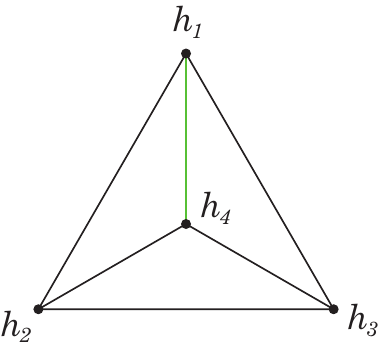}
 \caption{The first configuration which contributes to $a_{n+1}$.}
 \label{Fig.100}
 \end{figure}

\begin{lemma} \label{lmm:24}
For $n \geq 0$, $b_{n+1}=a_n^3+7a_n^2b_n+7a_nb_n^2+a_n^2c_n$.
\end{lemma}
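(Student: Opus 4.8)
The plan is to reprise, for the class $B_{n+1}$, the configuration-enumeration method already used for Lemma~\ref{lmm:23}. First I would pin down the recursive picture of Fig.~\ref{Fig.3} exactly: $A(n+1)$ is the union of the three copies $A(n)^1,A(n)^2,A(n)^3$, which share a single central vertex $c$ and pairwise share the three outer hub vertices $H_1,H_2,H_3$; the three inner edges $H_1c,H_2c,H_3c$ are the merged pairs, while the outer edges $H_1H_2,H_2H_3,H_3H_1$ are the hub edges of $A(n+1)$ and hence are absent from every subgraph counted in $B_{n+1}$. Each copy then contributes a subgraph \emph{without any of its own hub edges}, classified by how it ties together its three hubs: type $A_n$ (all three separate, weight $a_n$), one of the three $B_n$-variants (a prescribed pair tied, the third hub separate, weight $b_n$), or type $C_n$ (all three tied, weight $c_n$). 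The three merged inner edges are switched on or off globally; Lemma~\ref{lmm:22} is what legitimizes this bookkeeping, since it supplies the counts for precisely those copy-subgraphs in which merged hub edges are present.

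The structural observation I would establish is that, because every tree occurring in $A_n$, $B_n$ or $C_n$ contains at least one hub of its copy, the trees of the assembled subgraph are in bijection with the components of the \emph{hub quotient} obtained by contracting each copy to the partition it induces on $\{H_1,H_2,H_3,c\}$ and then adjoining whichever inner edges are switched on. Moreover a cycle in the assembled graph must pass through at least two of these four vertices and so projects to a cycle of the quotient, and conversely; hence the assembled subgraph is a spanning forest exactly when the quotient on $\{H_1,H_2,H_3,c\}$ is acyclic. It lies in $B_{n+1}$ with $H_1$ the isolated hub (the two remaining choices of isolated hub give $B'_{n+1},B''_{n+1}$, all equal by the stated symmetry $b_{n+1}=b'_{n+1}=b''_{n+1}$, so one suffices) exactly when the quotient is a two-component forest realizing the partition $\{H_1\}\mid\{H_2,H_3\}$. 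I would split on where $c$ attaches: either $c$ joins $H_2,H_3$, so the quotient is $\{H_1\}$ together with a spanning tree on $\{H_2,H_3,c\}$, or $c$ joins $H_1$, so the quotient is the two edges $H_1c$ and $H_2H_3$.

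With this dictionary the problem reduces to a finite enumeration. In each case I would list every assignment of copy-types and inner-edge switches whose quotient is the required two-edge acyclic graph, discarding every assignment in which one connection $H_ic$ is produced by two independent sources (a copy interior together with its merged edge, or two copies sharing that pair), since that creates a cycle; I then weight each admissible configuration by the corresponding product drawn from $\{a_n,b_n,c_n\}$. The one genuinely delicate point is that a \emph{needed} connection $H_ic$ can be realized in two weight-distinct ways, either by making the incident copy a $B_n$-variant (weight $b_n$) or by switching on the merged edge over a type-$A_n$ copy (weight $a_n$). Summing the first case yields $a_n^3+6a_n^2b_n+5a_nb_n^2+a_n^2c_n$ and the second yields $a_n^2b_n+2a_nb_n^2$; adding them gives the claimed $b_{n+1}=a_n^3+7a_n^2b_n+7a_nb_n^2+a_n^2c_n$.

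The hard part is the exhaustiveness and bookkeeping of this enumeration rather than any single estimate: I must be certain that (i) no admissible configuration is omitted and none double counted, (ii) every configuration producing a connection $H_ic$ twice over is rejected as cyclic, and (iii) the $a_n$-versus-$b_n$ weighting is attached correctly to each realized inner connection. It is exactly the split of the two coefficients $7$ and $7$ across the two cases (as $6{+}1$ and $5{+}2$) that is error-prone, so the safest guard is to test the formula against the seed data of Fig.~\ref{Fig.5}: the values $a_0=1,\ b_0=c_0=0$ must reproduce $b_1=1$, and $a_1=3,\ b_1=c_1=1$ must then reproduce $b_2$.
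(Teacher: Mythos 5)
Your proposal is correct and takes essentially the same route as the paper: both enumerate, over the three-copy merging of Fig.~\ref{Fig.3}, all assignments of copy types from $\{A_n,B_n,B'_n,B''_n,C_n\}$ together with on/off choices for the merged inner edges, keeping exactly those assemblies that form a two-tree spanning forest isolating one hub, and your two case totals $a_n^3+6a_n^2b_n+5a_nb_n^2+a_n^2c_n$ and $a_n^2b_n+2a_nb_n^2$ are both correct and sum to the stated recursion. The difference is only presentational: the paper records the enumeration graphically in Fig.~\ref{Fig.9} with symmetry multiplicities, whereas you organize it by where the central vertex attaches and justify the reduction to the four shared vertices via the hub-quotient (tree-contraction) argument, making explicit what the paper's figures leave implicit.
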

\ \begin{figure}[htbp]
 \centering
 \includegraphics[width=\textwidth]{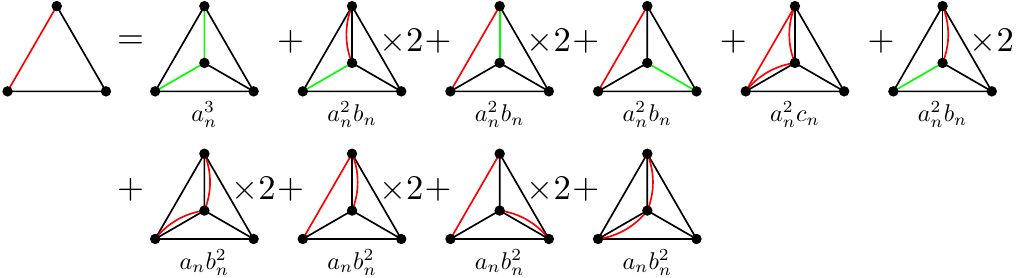}
 \caption{Configurations needed to find $b_{n+1}$.}
 \label{Fig.9}
 \end{figure}
\begin{proof}
We prove the lemma by enumeration. Figure~\ref{Fig.9} shows all the distinct possibilities.
Again, we only analyze the first case. We label the four hub vertices in the same way as in Fig.~\ref{Fig.100}.
In the first case, $h_1$, $h_2$, $h_4$ are all connected while $h_3$ is not. There are two spanning trees, and one has no hub edges, so this configuration belongs to set $B_{n+1}$.
Symmetries generate equivalent configurations and the factor is one.
\end{proof}

\begin{lemma} \label{lmm:25}
For $n \geq 0$, $c_{n+1}=a_n^3+12a_n^2b_n+36a_nb_n^2+14b_n^3+3a_n^2c_n+12a_nb_nc_n$.
\end{lemma}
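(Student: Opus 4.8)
The plan is to establish this recursion by the same enumeration technique used for Lemmas~\ref{lmm:23} and~\ref{lmm:24}, now applied to the spanning trees of $A(n+1)$ that contain no hub edge. First I would fix the decomposition of $A(n+1)$ into the three replicas $A(n)^1$, $A(n)^2$, $A(n)^3$, recording that they share the central hub vertex $h_4$ together with the three outer hub vertices $h_1,h_2,h_3$, that the three merged edges are precisely the center edges $h_ih_4$, and that the three outer edges $h_ih_j$ are the hub edges of $A(n+1)$. A spanning tree $T$ of $A(n+1)$ restricts on each copy to a spanning forest, and the combinatorial type of that forest is captured by which of the classes $A_n$, $B_n$ (or its symmetric variants), or $C_n$ it belongs to, together with the information from Lemma~\ref{lmm:22} on whether an internal center edge is used.

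Next I would impose the three global conditions that make the glued object an element of $C_{n+1}$: the union over the three copies must be connected, acyclic, and must avoid all three outer hub edges $h_1h_2$, $h_2h_3$, $h_3h_1$. The ``no outer hub edge'' requirement is purely local, since it merely forbids each copy from contributing its own external hub edge, so the genuine constraints are connectivity and acyclicity through the shared vertices $h_1,h_2,h_3,h_4$ and the merged center edges. As in the earlier lemmas, I would draw the complete list of admissible configurations (one subgraph per copy, up to the symmetry that permutes the replicas and the roles of the hub vertices) and read off, for each, the product of the corresponding cardinalities $a_n$, $b_n$, $c_n$ and its multiplicity.

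Summing these contributions then produces the six terms, and I expect the bookkeeping to organize itself by how many copies contribute a full spanning tree of themselves. The $a_n^2c_n$ and $a_nb_nc_n$ terms, in which $c_n$ occurs to the first power, come from \emph{exactly one} copy being a spanning tree while the other two supply the forest pieces that attach it to the remaining hubs without closing a cycle; the $a_n^3$, $a_n^2b_n$, $a_nb_n^2$, $b_n^3$ terms, carrying no factor of $c_n$, come from all three copies being genuine forests whose components knit together at $h_4$ and the outer vertices into a single tree. Every configuration that uses an internal center edge is tallied through Lemma~\ref{lmm:22}, which converts the hub-edge-containing subgraphs back into $a_n$ or $b_n$.

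The main obstacle will be the size and delicacy of this case analysis rather than any single deep idea: with six product types and multiplicities as large as $14$ and $36$, the difficulty is to enumerate exhaustively \emph{and} without repetition all the ways the forest components can merge through $h_4$ and the three outer vertices into a connected acyclic graph, while simultaneously tracking which of the merged center edges is supplied by which copy. Confirming that no admissible configuration is omitted and that the symmetry factors are assigned correctly, so that the $b_n^3$ contributions total exactly $14$ and the $a_nb_n^2$ contributions exactly $36$, is where the care is required; the acyclicity condition around the central vertex $h_4$ is the feature most likely to introduce an off-by-a-configuration error.
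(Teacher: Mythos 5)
Your framework is the same as the paper's: decompose $A(n+1)$ into three copies of $A(n)$ sharing the central vertex $h_4$ and glued along the three merged edges $h_ih_4$, classify each copy's restriction by the classes $A_n$, $B_n$, $C_n$ together with the hub-edge-containing sets of Lemma~\ref{lmm:22}, and enumerate the admissible configurations with their symmetry multiplicities --- this is exactly what the paper does, listing the configurations in Fig.~\ref{Fig.10} and working out only the first one in prose. The problem is that the organizing principle you propose for that enumeration, which is precisely the part you identify as the crux, is incorrect, so carrying out your plan as stated would not reproduce the six terms.

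You claim the terms $a_n^3$, $a_n^2b_n$, $a_nb_n^2$, $b_n^3$ come from ``all three copies being genuine forests.'' Edge counting rules this out for all but $b_n^3$: a spanning tree of $A(n+1)$ has $V_{n+1}-1=3V_n-6$ edges, and if no merged edge is used, the three restrictions are edge-disjoint genuine forests with $V_n-3$ edges (class $A_n$), $V_n-2$ (class $B_n$), or $V_n-1$ (class $C_n$) each; the only combinations reaching $3V_n-6$ are three $2$-forests (giving $b_n^3$) or one $C_n$ tree plus one $B_n$ plus one $A_n$ (giving $a_nb_nc_n$). In particular the very first term $a_n^3$ --- the single case the paper analyzes, where ``$h_1$, $h_2$, $h_3$ and $h_4$ are all connected'' --- cannot arise from forests at all: it comes from all three copies restricting to spanning trees, each containing both of its merged hub edges, counted as $a_n$ via the bijection of Lemma~\ref{lmm:22}(c), and the union has $3(V_n-1)-3=3V_n-6$ distinct edges because each merged edge lies in two restrictions simultaneously. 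This last point exposes a second slip: a merged edge in the tree is not ``supplied by'' one copy, it automatically belongs to both adjacent copies' restrictions, and tracking this double counting is exactly what is needed to land on coefficients like $36$ and $14$. Your closing remark that hub-edge configurations are tallied through Lemma~\ref{lmm:22} points in the right direction, but it contradicts the spanning-tree-versus-genuine-forest dichotomy you intend to enumerate by; before the enumeration can be executed, that dichotomy must be replaced by a classification according to which of the three merged edges the spanning tree of $A(n+1)$ actually contains.
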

\ \begin{figure}[htbp]
 \centering
 \includegraphics[width=\textwidth]{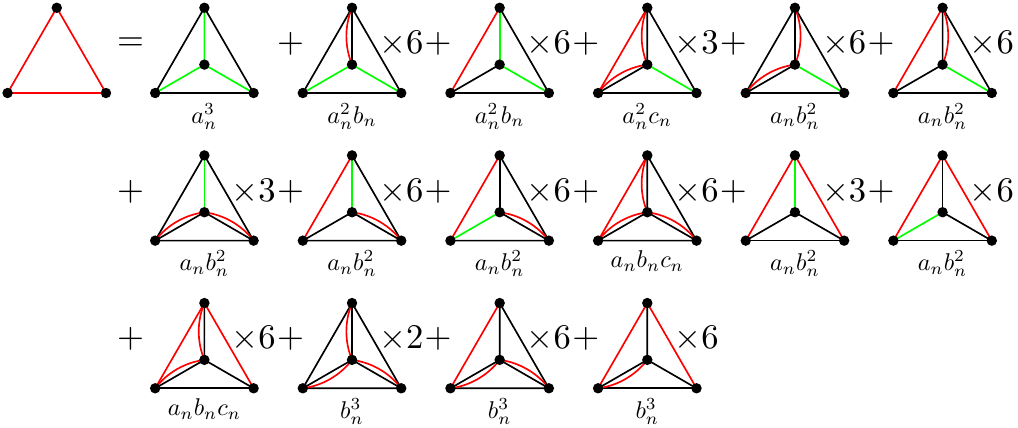}
 \caption{Configurations that contribute to  $c_{n+1}$.}
 \label{Fig.10}
 \end{figure}
\begin{proof}
As in former lemmas, the proof is by enumeration of all possible contributions to $c_{n+1}$, see in Fig.~\ref{Fig.10} the details.
In the first case, $h_1$, $h_2$, $h_3$ and $h_4$ are all connected and the merging process produces a spanning tree.
As no hub edges are included  in it, we can see that this case belongs to set $C_{n+1}$.
Besides, because of the symmetry, only this configuration is relevant.
All other cases are analyzed similarly and  we omit the details.
\end{proof}

\begin{lemma} \label{lmm:26}
For $n \geq 0$, $s_{n+1}=16a_n^3+72a_n^2b_n+78a_nb_n^2+14b_n^3+9a_n^2c_n+12a_nb_nc_n$.
\end{lemma}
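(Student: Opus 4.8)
The plan is to count the spanning trees of $A(n+1)$ by exactly the same three-copy merging analysis used for Lemmas~\ref{lmm:23}--\ref{lmm:25}, but now collecting every combination of per-copy contributions whose merge is a single spanning tree of $A(n+1)$, rather than only those falling into one prescribed target class. By the self-similar decomposition of Fig.~\ref{Fig.3}, any spanning tree $T$ of $A(n+1)$ restricts on each replica $A(n)^i$ to a spanning subgraph, and this induced subgraph necessarily belongs to one of the building blocks already enumerated: the classes $A_n$, $B_n$, $C_n$ (and the symmetric variants $B_n'$, $B_n''$), or the hub-edge carrying classes of Lemma~\ref{lmm:22}, whose cardinalities are $a_n$, $b_n$, $c_n$, and (for the hub-edge classes) $a_n$ or $b_n$. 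Whether the merge of three such blocks is a tree is decided entirely by how the tracked hub vertices $h_1,\dots,h_4$ are connected inside each copy, since every interior vertex is already absorbed into one of that copy's trees.

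First I would reduce each copy to its hub-connection pattern and then enumerate, in the style of Fig.~\ref{Fig.10}, all pattern triples whose union on $h_1,\dots,h_4$ is connected and acyclic; each admissible triple yields a monomial in $a_n,b_n,c_n$ weighted by the number of rotations and reflections of the three-copy arrangement that realize it. Grouping by monomial type and summing these symmetry multiplicities should produce the coefficients $16,72,78,14,9,12$. As an organizing principle and cross-check I would split $s_{n+1}=c_{n+1}+(\text{spanning trees containing at least one hub edge})$: the first term is given by Lemma~\ref{lmm:25}, and since the three hub edges of $A(n+1)$ form a triangle no spanning tree can use all three, so only the one- and two-hub-edge cases remain. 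Moreover, the construction of Fig.~\ref{Fig.3} leaves exactly one hub edge of each copy on the outer boundary, so a hub edge of $A(n+1)$ belongs to $T$ precisely when the corresponding copy contributes that hub edge; Lemma~\ref{lmm:22} then converts such copy contributions into factors of $a_n$ or $b_n$. This explains why the hub-edge trees augment only the monomials $a_n^3$, $a_n^2b_n$, $a_nb_n^2$ and $a_n^2c_n$ while leaving the $b_n^3$ and $a_nb_nc_n$ coefficients equal to their values in $c_{n+1}$.

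The hard part will be the exhaustive, non-redundant enumeration together with the correct symmetry multiplicities: I must make sure every spanning tree of $A(n+1)$ is counted exactly once, that acyclicity is genuinely enforced in each merged pattern (in particular ruling out the triples that would close a cycle through the hub vertices), and that the symmetry classes do not overlap. The most error-prone coefficient is the one on $a_nb_n^2$, which jumps from $36$ in $c_{n+1}$ to $78$ here; pinning down exactly which additional one- and two-hub-edge configurations contribute, and with what multiplicity, is where the careful bookkeeping resides. Once the case analysis is complete the stated identity follows by collecting like terms.
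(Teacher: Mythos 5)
Your proposal is correct and takes essentially the same route as the paper: the paper's own proof is exactly this symmetry-weighted enumeration (via Fig.~\ref{Fig.11}) of all triples of replica restrictions --- drawn from the classes $A_n$, $B_n$, $C_n$ and the hub-edge classes of Lemma~\ref{lmm:22} --- whose merge yields a spanning tree of $A(n+1)$, with the details declared analogous to Lemmas~\ref{lmm:23}--\ref{lmm:25}. Your organizing split $s_{n+1}=c_{n+1}+(\text{trees with one or two hub edges})$ is a sound internal cross-check rather than a different method, and it is arithmetically consistent with the stated coefficients ($16=1+15$, $72=12+60$, $78=36+42$, $9=3+6$, with the $b_n^3$ and $a_nb_nc_n$ terms unchanged).
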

\begin{figure}[htbp]
 \centering
 \includegraphics[width=\textwidth]{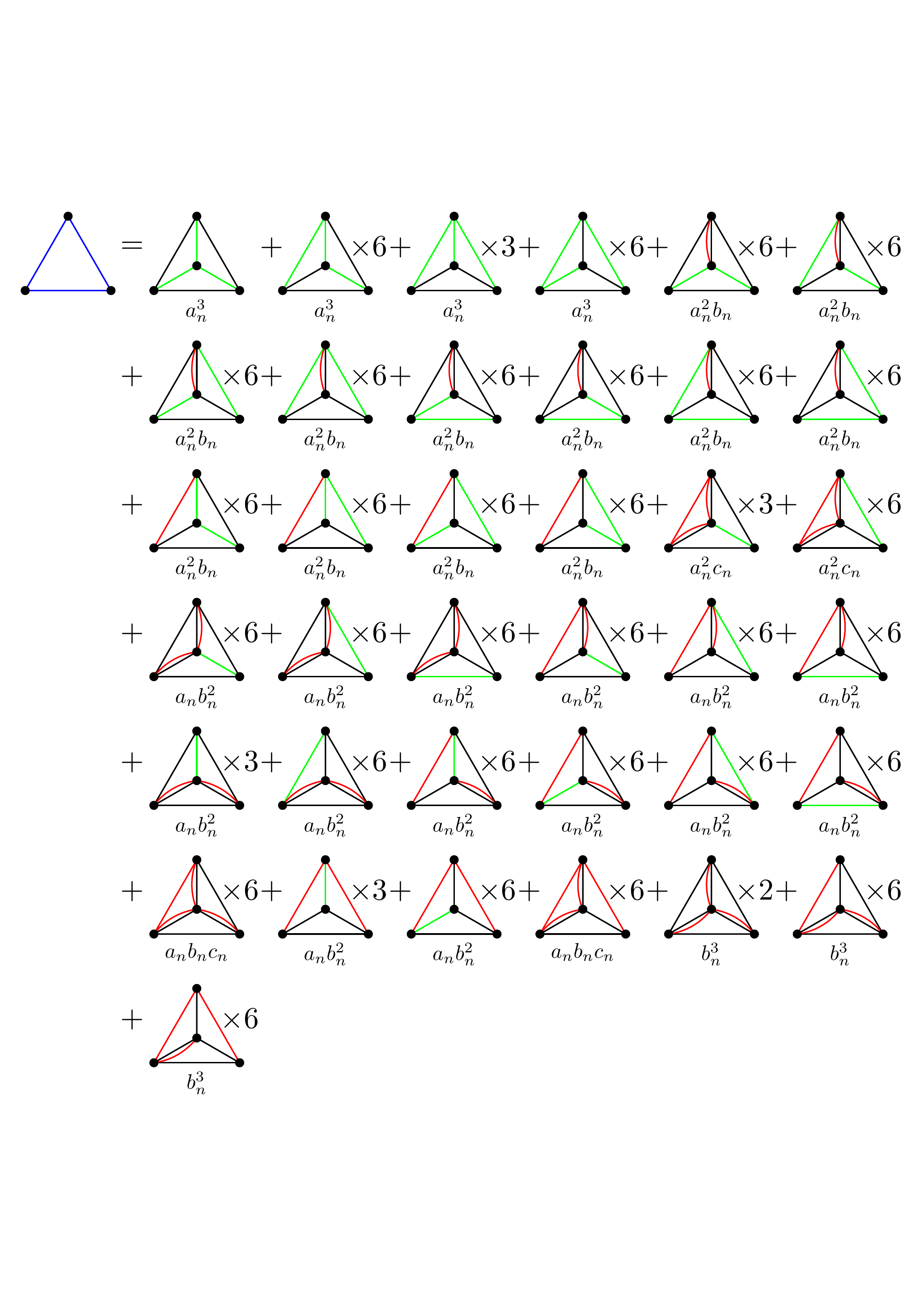}
 \caption{Illustration for the calculation of $s_{n+1}$. Here, a blue line indicates that the corresponding  hub vertices  belong to the same spanning tree (including both the cases where they are adjacent and  they are connected through other vertices).}
 \label{Fig.11}
 \end{figure}
\begin{proof}
Figure~\ref{Fig.11} shows all the configurations contributing $s_{n+1}$.
We do not give the calculation details as they are like in previous lemmas.
\end{proof}
\bigskip


The next three lemmas give the values of $a_n$, $b_n$ and $c_n$.

\begin{lemma} \label{lmm:27}
For $n \geq 0$, $a_n=3^{-\frac{1}{4}+\frac{3^n}{4}+\frac{n}{2}}5^{-\frac{1}{4}+\frac{3^n}{4}-\frac{n}{2}}$.
\end{lemma}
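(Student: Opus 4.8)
The plan is to reduce the coupled system of Lemmas~\ref{lmm:23}--\ref{lmm:25} to a single recursion for $a_n$. Observe first that Lemma~\ref{lmm:23} can be written as $a_{n+1}=3a_n^2(a_n+2b_n)$, so $a_{n+1}$ is determined by $a_n$ and $b_n$ alone; but $b_n$ is tied to $c_n$ through Lemma~\ref{lmm:24}, and $c_n$ to both through Lemma~\ref{lmm:25}, so nothing decouples immediately. To untangle this I would pass to the ratios $x_n=b_n/a_n$ and $y_n=c_n/a_n$ (well defined since $a_n>0$), for which the three recursions become
\begin{equation*}
x_{n+1}=\frac{1+7x_n+7x_n^2+y_n}{3+6x_n},\qquad y_{n+1}=\frac{1+12x_n+36x_n^2+14x_n^3+3y_n+12x_ny_n}{3+6x_n},
\end{equation*}
with $x_0=y_0=0$ coming from the values $a_0=1$, $b_0=c_0=0$ recorded in Fig.~\ref{Fig.5}.

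The cornerstone of the argument, and the step I expect to require the real insight, is to guess and prove the invariant $a_nc_n=3b_n^2$, equivalently $y_n=3x_n^2$. I would establish it by induction on $n$: it holds at $n=0$, and assuming $y_n=3x_n^2$ one substitutes into the two displayed fractions and checks the polynomial identities
\begin{equation*}
1+7x_n+10x_n^2=(1+2x_n)(1+5x_n),\qquad 1+12x_n+45x_n^2+50x_n^3=(1+2x_n)(1+5x_n)^2,
\end{equation*}
from which $y_{n+1}=3x_{n+1}^2$ follows after cancelling the common factor $3+6x_n=3(1+2x_n)$.

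Granted the invariant, the first fraction collapses: the factor $(1+2x_n)$ cancels and
\begin{equation*}
x_{n+1}=\frac{1+5x_n}{3}.
\end{equation*}
This affine recursion has fixed point $-1/2$, so $x_n+\tfrac12=(5/3)^n\bigl(x_0+\tfrac12\bigr)$ and hence $x_n=\dfrac{5^n-3^n}{2\cdot 3^n}$. Feeding this back into $a_{n+1}=3a_n^3(1+2x_n)$ and using $1+2x_n=5^n/3^n$ yields the fully decoupled recursion
\begin{equation*}
a_{n+1}=3^{\,1-n}5^{\,n}a_n^3,\qquad a_0=1.
\end{equation*}

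Finally I would write $a_n=3^{\alpha_n}5^{\beta_n}$, which turns the last recursion into the two independent linear recurrences $\alpha_{n+1}=3\alpha_n+(1-n)$ and $\beta_{n+1}=3\beta_n+n$ with $\alpha_0=\beta_0=0$. Each is solved by superposing the homogeneous solution $C\cdot 3^n$ with a particular solution linear in $n$; matching the initial condition gives $\alpha_n=\tfrac14(3^n+2n-1)$ and $\beta_n=\tfrac14(3^n-2n-1)$, which are exactly the exponents in the statement. The routine parts here are the two elementary recurrence solutions and the polynomial algebra; the one genuinely nontrivial ingredient is spotting the relation $a_nc_n=3b_n^2$, since without it $x_n$ stays coupled to $y_n$ and no closed form for $a_n$ is visible.
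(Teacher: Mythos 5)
Your proposal is correct and follows essentially the same route as the paper's proof: the same key invariant $a_nc_n=3b_n^2$ (your $y_n=3x_n^2$) established by induction, the same resulting decoupled recursion $a_{n+1}=3^{1-n}5^{n}a_n^3$, and the same closed-form solution. The only differences are presentational: you work with the ratios $x_n=b_n/a_n$, $y_n=c_n/a_n$ and solve the affine recursion for $x_n$ explicitly, whereas the paper manipulates $b_n/a_n$ directly and substitutes $w_n=a_{n+1}/a_n^3=3(1+2x_n)$, which is the same linear recursion in disguise.
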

\begin{proof}
To obtain this closed-form expression, Lemma~\ref{lmm:23} will provide a recursive 
equation for  $a_n$.
Thus,  we use Lemma~\ref{lmm:24} to write $b_n$ in terms of $a_n$  and,  as a preliminary result, 
we need to prove that $a_n c_n=3 b_n^2$:

We use induction. For $n=0$,  the initial conditions $a_0=1$ and $b_0=c_0=0$ make the equation true.
Let us  assume that for $n=k$, the equality holds.
For $n=k+1$, and from Lemmas~\ref{lmm:23}-\ref{lmm:25}, we have that
\begin{eqnarray*}
a_{k+1}c_{k+1}-3b_{k+1}^2&=&(3a_k^3+6a_k^2b_k)(a_k^3+12a_k^2b_k+36a_kb_k^2+14b_k^3+3a_k^2c_k\\
&&+12a_kb_kc_k)-3(a_k^3+7a_k^2b_k+7a_kb_k^2+a_k^2c_k)^2\\
&=&3a_k^2(a_k^2+4a_kb_k+7b_k^2-a_kc_k)(a_kc_k-3b_k^2)\,,
\end{eqnarray*}
and as $a_{k}c_{k}-3b_{k}^2=0$ (induction hypothesis), we reach the result.

With this result, we can replace in Lemma~\ref{lmm:24}    $a_n^2 c_n$ by $3 a_n b_n^2$ and as
$\frac{b_n}{a_n}=\frac{a_{n+1}}{6 a_n^3}-\frac{1}{2}$ (Lemma~\ref{lmm:23}), we obtain
\begin{equation*}
\frac{b_{n+1}}{a_n^3}=1+7\frac{b_n}{a_n}+10\left(\frac{b_n}{a_n}\right)^2 =-\frac{a_{n+1}}{2a_n^3}+\frac{5a_{n+1}^2}{18a_n^6},
\end{equation*}
which we can write as
\begin{equation*}
\frac{b_{n+1}}{a_{n+1}}=-\frac{1}{2}+\frac{5a_{n+1}}{18a_n^3}.
\end{equation*}
Fom Lemma~\ref{lmm:23}, we have
\begin{equation*}
\frac{a_{n+2}}{a_{n+1}^3}=3+6\frac{b_{n+1}}{a_{n+1}}=3+6\left(-\frac{1}{2}+\frac{5a_{n+1}}{18a_n^3}\right)=\frac{5a_{n+1}}{3a_n^3}.
\end{equation*}
and, if we define $w_n=\frac{a_{n+1}}{a_n^3}$, we obtain the recursion $w_{n+1}=\frac{5}{3}w_n$ which together with the initial
condition $w_0=\frac{a_1}{a_0^3}=3$  leads to $w_n=\frac{5^n}{3^{n-1}}$ and allows us to write 
\begin{equation*}
a_{n+1}=\frac{5^n}{3^{n-1}}a_n^3.
\end{equation*}
This equation, with the condition  $a_0=1$,  gives
\begin{equation*}
a_n=3^{-\frac{1}{4}+\frac{3^n}{4}+\frac{n}{2}}5^{-\frac{1}{4}+\frac{3^n}{4}-\frac{n}{2}}.
\end{equation*}
\end{proof}

\begin{lemma} \label{lmm:28}
For $n \geq 0$, $b_n=\frac{1}{2}15^{\frac{1}{4}(-1+3^n-2n)}(5^n-3^n)$.
\end{lemma}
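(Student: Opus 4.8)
The plan is to read off $b_n$ directly from the closed form for $a_n$ in Lemma~\ref{lmm:27}, using the auxiliary identities that were already established inside that proof rather than setting up a fresh recursion. First I would recall that the proof of Lemma~\ref{lmm:27} produced two usable facts: the ratio identity
\begin{equation*}
\frac{b_{n+1}}{a_{n+1}}=-\frac{1}{2}+\frac{5a_{n+1}}{18a_n^3},
\end{equation*}
together with the explicit value $w_n=\frac{a_{n+1}}{a_n^3}=\frac{5^n}{3^{n-1}}$. Substituting the second into the first and simplifying $\frac{5}{18}\cdot\frac{5^n}{3^{n-1}}=\frac{5^{n+1}}{2\cdot 3^{n+1}}$ gives $\frac{b_{n+1}}{a_{n+1}}=\frac{1}{2}\cdot\frac{5^{n+1}-3^{n+1}}{3^{n+1}}$, so after a one-step index shift
\begin{equation*}
\frac{b_n}{a_n}=\frac{1}{2}\cdot\frac{5^n-3^n}{3^n}\qquad(n\ge 1).
\end{equation*}

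Next I would multiply this ratio by the closed form $a_n=3^{-\frac14+\frac{3^n}{4}+\frac n2}5^{-\frac14+\frac{3^n}{4}-\frac n2}$ from Lemma~\ref{lmm:27} and collect the powers of $3$ and $5$. The division by $3^n$ lowers the exponent of $3$ from $-\frac14+\frac{3^n}{4}+\frac n2$ to $-\frac14+\frac{3^n}{4}-\frac n2$, which then coincides with the exponent of $5$; hence $a_n/3^n$ collapses into the single power $15^{\frac14(-1+3^n-2n)}$, and one obtains exactly $b_n=\frac{1}{2}\,15^{\frac14(-1+3^n-2n)}(5^n-3^n)$.

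Finally I would treat the base case $n=0$ separately, since the ratio identity only delivers $b_m/a_m$ for $m\ge 1$: here $b_0=0$ by the initial conditions recorded in Fig.~\ref{Fig.5}, while the claimed formula also vanishes because $5^0-3^0=0$, so both sides agree. I do not expect any real obstacle in this argument; essentially all the work was front-loaded into Lemma~\ref{lmm:27}. The only point requiring care is the bookkeeping of the rational exponents when merging the powers of $3$ and $5$ into one power of $15$, and keeping track of the single index shift between the recurrence (naturally written for $a_{n+1},b_{n+1}$) and the statement (written for $a_n,b_n$).
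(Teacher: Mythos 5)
Your proposal is correct and follows essentially the same route as the paper: both arguments reduce to computing the ratio $b_n/a_n=\frac{5^n-3^n}{2\cdot 3^n}$ from the already-established recursive identities and then multiplying by the closed form of $a_n$ from Lemma~\ref{lmm:27}. The paper does this marginally more directly by solving Lemma~\ref{lmm:23} for $b_n$ as $b_n=\frac{a_{n+1}-3a_n^3}{6a_n^2}$, which holds for all $n\ge 0$ and thus spares the index shift and the separate $n=0$ check that your variant requires.
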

\begin{proof}
From Lemma~\ref{lmm:23} we have  $b_n=\frac{a_{n+1}-3a_n^3}{6a_n^2}$  and the result follows
from the expression of $a_n$ found in Lemma~\ref{lmm:27}.
\end{proof}

\begin{lemma} \label{lmm:29}
For $n \geq 0$, $c_n=\frac{1}{4} 3^{\frac{1}{4} \left(3+3^n-6 n\right)} 5^{\frac{1}{4} \left(-1+3^n-2 n\right)} \left(3^n-5^n\right)^2$.
\end{lemma}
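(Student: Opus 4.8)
The plan is to avoid re-running the recursion of Lemma~\ref{lmm:25} and instead exploit the algebraic identity $a_n c_n = 3b_n^2$ that was already proved (by induction on $n$) as a preliminary step inside the proof of Lemma~\ref{lmm:27}. Since $a_n>0$ for every $n\ge 0$, that identity can be solved directly to give $c_n = 3b_n^2/a_n$. Once this is noted, the lemma reduces entirely to substituting the closed forms obtained in Lemmas~\ref{lmm:27} and~\ref{lmm:28} and collecting exponents, so essentially no new combinatorial work is required.

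Concretely, I would first record the two inputs, namely $a_n = 3^{-\frac14+\frac{3^n}{4}+\frac n2}\,5^{-\frac14+\frac{3^n}{4}-\frac n2}$ and $b_n = \frac12\,15^{\frac14(-1+3^n-2n)}(5^n-3^n)$. Then I would split $15^{\frac14(-1+3^n-2n)} = 3^{\frac14(-1+3^n-2n)}5^{\frac14(-1+3^n-2n)}$, square $b_n$, multiply by $3$, and divide by $a_n$. The factor $(5^n-3^n)^2=(3^n-5^n)^2$ passes through unchanged, the rational prefactor $\tfrac14$ from squaring $b_n$ survives (the extra factor $3$ being absorbed into the exponent of the base $3$), and the only genuine computation is the separate bookkeeping of the exponents of $3$ and of $5$.

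For the base $3$ the exponent is
\[
1+\tfrac12(-1+3^n-2n)+\tfrac14-\tfrac{3^n}{4}-\tfrac n2=\tfrac14\bigl(3+3^n-6n\bigr),
\]
where the leading $1$ comes from the factor $3$ in $3b_n^2$, the middle term from $b_n^2$, and the last three terms from $1/a_n$. For the base $5$ the exponent is
\[
\tfrac12(-1+3^n-2n)+\tfrac14-\tfrac{3^n}{4}+\tfrac n2=\tfrac14\bigl(-1+3^n-2n\bigr),
\]
with the first term coming from $b_n^2$ and the remaining terms from $1/a_n$. Combining these with the prefactor $\tfrac14$ and the factor $(3^n-5^n)^2$ yields exactly the claimed expression for $c_n$.

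The main (and only) obstacle is keeping the fractional exponents straight when dividing by $a_n$; this is routine once the identity $a_n c_n=3b_n^2$ is in hand, since the heavy lifting was already carried out in Lemma~\ref{lmm:27}. An alternative route would be a direct induction using the recursion $c_{n+1}=a_n^3+12a_n^2b_n+36a_nb_n^2+14b_n^3+3a_n^2c_n+12a_nb_nc_n$ of Lemma~\ref{lmm:25}, but that is considerably messier and I would avoid it in favor of the quotient argument above.
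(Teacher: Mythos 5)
Your proposal is correct and follows exactly the paper's own route: the paper likewise invokes the identity $c_n=\frac{3b_n^2}{a_n}$ established inside the proof of Lemma~\ref{lmm:27} and then substitutes the closed forms of $a_n$ and $b_n$ from Lemmas~\ref{lmm:27} and~\ref{lmm:28}. Your exponent bookkeeping for the bases $3$ and $5$ is accurate, so nothing is missing.
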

\begin{proof}
From the proof of  Lemma~\ref{lmm:27} we have $c_n=\frac{3b_n^2}{a_n}$,
and using  $a_n$ and $b_n$ as found in  former lemmas, we obtain  $c_n$.
\end{proof}
\medskip


The main result of this section, the number of spanning trees of the Apollonian network $A(n)$ (Theorem~\ref{th:21}), follows 
from Lemma~\ref{lmm:26} and the expressions  for $a_n$, $b_n$ and $c_n$ obtained in
Lemmas~\ref{lmm:27}, \ref{lmm:28} and \ref{lmm:29}:
\begin{equation*}
s_n=\frac{1}{4}3^{\frac{3}{4}(-1+3^{n-1}-2(n-1))}5^{\frac{1}{4}(-3+3^n-2(n-1))}(3^n+5^n)^2.
\end{equation*}

\medskip

\section{Spanning tree entropy of  Apollonian networks}

After having an explicit expression for the number of spanning trees
of $A(n)$, we can calculate its spanning tree  entropy, which
is defined as in~\cite{Ly05,Wu77}:
\begin{equation*}
z=\lim_{n \to \infty}\frac{\ln s_n}{V_n}.
\end{equation*}\label{eq:en}



\begin{coro}
The spanning tree entropy of Apollonian networks is $\displaystyle{\frac{\ln 15}{2}}$.
\end{coro}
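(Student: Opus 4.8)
The plan is to substitute the closed form for $s_n$ from Theorem~\ref{th:21}, together with the vertex count $V_n=\frac{1}{2}(3^n+5)$, into the definition of $z$ and then extract the leading asymptotics. First I would take logarithms of the expression for $s_n$, obtaining
\[
\ln s_n = -\ln 4 + \tfrac{3}{4}\bigl(-1+3^{n-1}-2(n-1)\bigr)\ln 3 + \tfrac{1}{4}\bigl(-3+3^n-2(n-1)\bigr)\ln 5 + 2\ln(3^n+5^n).
\]
The goal is then to identify which part of this sum survives division by $V_n$ in the limit.

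The key observation is that among all the summands, only the two pieces proportional to $3^n$ contribute. Indeed, $\frac{3}{4}\cdot 3^{n-1}\ln 3 = \frac{1}{4}3^n\ln 3$ together with $\frac{1}{4}3^n\ln 5$ gives $\frac{1}{4}3^n\ln 15$. Every remaining contribution is subdominant: the constant $-\ln 4$, the terms linear in $n$, and the factor $2\ln(3^n+5^n)$, which (since $5^n$ dominates $3^n$) behaves like $2n\ln 5$, all grow at most linearly in $n$. Dividing any of these by $V_n\sim\frac{1}{2}3^n$ therefore yields $0$ in the limit.

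Putting this together, I would write $\ln s_n = \frac{1}{4}3^n\ln 15 + O(n)$ and $V_n = \frac{1}{2}3^n + \frac{5}{2}$, so that
\[
z = \lim_{n\to\infty}\frac{\ln s_n}{V_n} = \lim_{n\to\infty}\frac{\tfrac{1}{4}3^n\ln 15 + O(n)}{\tfrac{1}{2}3^n+\tfrac{5}{2}} = \frac{\tfrac{1}{4}\ln 15}{\tfrac{1}{2}} = \frac{\ln 15}{2},
\]
which is the claimed value.

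There is no genuine obstacle here: the only care required is the bookkeeping that confirms every subdominant term—most notably the $2\ln(3^n+5^n)$ factor, whose growth is merely linear in $n$—is annihilated upon division by the exponentially growing $V_n$. The whole argument amounts to a routine extraction of the leading exponential coefficient of $\ln s_n$.
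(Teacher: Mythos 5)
Your proposal is correct and follows essentially the same route as the paper: substitute the closed form of $s_n$ from Theorem~\ref{th:21} and $V_n=\frac{1}{2}(3^n+5)$ into $z_n=\frac{\ln s_n}{V_n}$, observe that the terms $\frac{1}{4}3^n\ln 3+\frac{1}{4}3^n\ln 5=\frac{1}{4}3^n\ln 15$ dominate while the constant, linear-in-$n$, and $2\ln(3^n+5^n)=O(n)$ terms vanish upon division by $V_n\sim\frac{1}{2}3^n$. The paper simply writes out the exact expression for $z_n$ and takes the limit, which is the same bookkeeping you perform with $O(n)$ notation.
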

\begin{proof}
Define $z_n=\frac{\ln s_n}{V_n}$.  From Theorem~\ref{th:21}, we have
\begin{equation*}
z_n=\frac{-8\ln2+\ln\frac{27}{5}+3^n\ln15-2n\ln135+8\ln(3^n+5^n)}{2(5+3^n)}\,,
\end{equation*}
and thus
\begin{equation*}
z=\lim\limits_{n\to \infty} z_n=\frac{\ln 15}{2}.
\end{equation*}
\end{proof}

We can compare this asymptotic value of the entropy  of  the spanning trees  for Apollonian networks,
 $z=\frac{\ln 15}{2}\simeq 1.3540$, with that of other relevant graphs with the same average degree.
For example, the value for the 3-dimensional Sierpinski graph is 1.5694~\cite{ChChYa07}   and
for the 3-dimensional hypercubic lattice  ${\cal{L}}_3$
 is $1.6734$~\cite{FeLy03,ShWu00}.
Thus, the asymptotic value for Apollonian networks reflects the fact that the number of 
spanning trees in $A(n)$, although growing exponentially, do it at a lower rate 
than these graphs which have the same average degree.

This result would suggest that Apollonian networks, as they have fewer spanning trees, 
are less reliable to a random removal of edges than the graphs cited above.  
However, Apollonian networks are scale-free and it is known that graphs with this degree distribution are more resilient than homogeneous graphs, see for example \cite{Tu00}. 
Thus, the particular degree distribution of graphs $A(n)$ might increase their robustness in relation to
regular graphs with the same order and size.  
These considerations indicate that it would be of interest to study the connections among the spanning tree entropy of a graph and other relevant graph parameters like, for example,  degree distribution and degree correlation.

\section{Conclusion}
In this paper we find the number of spanning trees
in Apollonian networks by using a method, based on its self-similar structure,
which allows us to obtain an  exact analytical expression for any number of discs.
The method could be used to further study in this graph, and other self-similar graphs,
their spanning forests, connected spanning subgraphs, random walks
and vertex or edges coverings.
Knowing the  number of spanning trees for Apollonian networks allows us to show
that their spanning tree entropy is lower than in other graphs with the same average degree.

\section*{Acknowledgements}
Z. Zhang is supported by the National Natural Science Foundation of China under Grants No. 61074119. F. Comellas is supported by the Ministerio de Economia y Competitividad, Spain, and the European Regional Development Fund under project MTM2011-28800-C02-01 and partially supported by the Catalan Research Council under grant 2009SGR1387.



\end{document}